\def\XXint#1#2#3{{\setbox0=\hbox{$#1{#2#3}{\int}$ }
\vcenter{\hbox{$#2#3$ }}\kern-.6\wd0}}
\def\({\left(}
\def \){ \right)}
\newtheorem{theorem}{Theorem}[section]
\newtheorem{lemma}[theorem]{Lemma}
\theoremstyle{definition}
\newtheorem{remark}[theorem]{Remark}
\renewcommand{\appendix}{\par
   \setcounter{section}{0}%
   \setcounter{subsection}{0}%
   \setcounter{subsubsection}{0}%
   \gdef\thesection{\@Alph\c@section}%
   \gdef\thesubsection{\@Alph\c@section.\@arabic\c@subsection}%
   \gdef\theHsection{\@Alph\c@section.}%
   \gdef\theHsubsection{\@Alph\c@section.\@arabic\c@subsection}%
   \csname appendixmore\endcsname
 }
\numberwithin{equation}{section}
\begin{document}

\arraycolsep=1pt

\title{\bf\Large Convergence of a Class of Schr\"{o}dinger Equations
\footnotetext{\hspace{-0.35cm} 2010 {\it
Mathematics Subject Classification}. Primary 35Q41.
\endgraf {\it Key words and phrases.} Schr\"{o}dinger equation, convergence, Sobolev spaces.
}
}
\author{ Dan Li and Haixia Yu\footnote{Corresponding author.}}
\date{}
\maketitle

\vspace{-0.7cm}

\begin{center}
\begin{minipage}{13cm}
{\small {\bf Abstract}\quad
In this paper, we set up the selection conditions for time series $\{t_k\}_{k=1}^\infty$ which converge to 0 as $k\rightarrow\infty$ such that the solutions of a class of generalized Schr\"odinger equations almost everywhere  pointwise converge to their initial data in $H^s(\mathbb{R}^n)$ for $s>0$. As it is known that the pointwise convergence can not be true for Schr\"odinger equation when $s<\frac{n}{2(n+1)}$ as $t\rightarrow0$.

}
\end{minipage}
\end{center}


\section{Introduction}
We first consider the \emph{$a$-th order Schr\"{o}dinger equation} defined on $\mathbb{R}^{n+1}$ such that
\begin{equation}\label{1.1}
\left\{
\begin{aligned}
 iu_t+(-\Delta)^{\frac{a}{2}}u&=0,\quad &t>0,\\
u(x,0)&=f(x),\quad &t=0, \\
\end{aligned}
\right.
\end{equation}
where $a\in(0,\infty)$ and $f\in H^s(\mathbb{R}^n)$. The \emph{Sobolev spaces $H^s(\mathbb{R}^n)$} is defined as
$$H^s(\mathbb{R}^n):=\{f\in\mathcal{S}^\prime :\ \|f\|_{H^s(\mathbb{R}^n)}<\infty\},\quad s\in\mathbb{R}$$
with $$\|f\|_{H^s(\mathbb{R}^n)}:=\left(\int_{\mathbb{R}^n} (1+|\xi|^2)^s |\widehat{f}(\xi)|^2\,\textrm{d}\xi\right)^{\frac{1}{2}}.$$
The solution of \eqref{1.1} can formally be written as
$$u(x,t)=(2\pi)^{-n}S_{t,a}f(x).$$
Here
\begin{align}\label{eq:1.2}
S_{t,a}f(x):=\int_{\mathbb{R}^n} e^{ix\cdot\xi}e^{it|\xi|^a}\widehat{f}(\xi)\,\textrm{d}\xi, \quad x\in\mathbb{R}^n, \quad t\geq0,
\end{align}
and
$$\widehat{f}(\xi):=\int_{\mathbb{R}^n} e^{-i\xi\cdot x}f(x)\,\textrm{d}x, \quad \xi\in\mathbb{R}^n.$$

Carleson \cite{C} proposed a problem that determine the optimal $s_c$ such that
\begin{align}\label{eq:1.3}
\lim_{t\rightarrow0}(2\pi)^{-n}S_{t,2}f(x)=f(x)  \quad\textrm{a}.\textrm{e}.~ x\in\mathbb{R}^n
\end{align}
for all $f\in H^s(\mathbb{R}^n)$ with $s\geq s_c$. In the last four decades, Carleson Problem \eqref{eq:1.3} has attracted numerous attentions (see \cite{B3,B1,B2,DGLZ,DKWZ,L,MVV,S1,TV,V1} and the referees therein). And now it is known that $s_c=\frac{n}{2(n+1)}$ for $n\geq1$ is the critical index.  For $n=1$, Carleson \cite{C} set up \eqref{eq:1.3}  for $s\geq \frac{1}{4}$ and  Dahlberg and Kenig \cite{DK} showed that \eqref{eq:1.3} does not hold for $s<\frac{1}{4}$. For $n\geq 2$, Bourgain \cite{B2} formulated the counterexamples for all the case $s<\frac{n}{2(n+1)}$. Very recently, Du, Guth and Li \cite{DGL} set up \eqref{eq:1.3} for $s>\frac{1}{3}$ and $n=2$.  Du and Zhang \cite{DZ} then proved \eqref{eq:1.3} for $s>\frac{n}{2(n+1)}$ and $n\geq 2$.

There are some results for the $a$-th order Schr\"{o}dinger equation. If we replace $S_{t,2}$ in \eqref{eq:1.3} by $S_{t,a}$, we know that \eqref{eq:1.3} holds for $f\in H^{\frac{1}{2}}(\mathbb{R}^2)$ in the case that $n=2$ and $a>1$. When $n\geq 3$ and $a>1$, \eqref{eq:1.3} has been proved for $f\in H^s(\mathbb{R}^n)$ with $s>\frac{1}{2}$; see, for example, \cite{S1,V2}. Miao, Yang and Zheng \cite{MYZ} showed that \eqref{eq:1.3} holds for $s>\frac{3}{8}$ when $n=2$ and $a>1$. Recently, Cho and Ko \cite{CK} improved this result to $s>\frac{n}{2(n+1)}$ when $n\geq1$ and $a>1$.

On the other  hand, it is easy to see that $\lim_{t\rightarrow0}\|(2\pi)^{-n}S_{t,a}f(x)-f(x)\|_{L^2(\mathbb{R}^n)}=0$. From Riesz theorem there exists $\{t_k\}_{k=1}^\infty\rightarrow0$ as $k\rightarrow\infty$ such that
\begin{align}\label{eq:1.55}
\lim_{k\rightarrow\infty}(2\pi)^{-n}S_{t_k,a}f(x)=f(x),\quad\textrm{a}.\textrm{e}.~ x\in\mathbb{R}^n\end{align}
for $s\geq0$. In \cite{S2} Sj\"{o}lin showed that \eqref{eq:1.55} holds whenever
 $\sum_{k=1}^\infty {t_k}^{\frac{2s}{a}}<\infty$.
In this paper we try to consider this kind of pointwise convergence for some generalized Sch\"odinger equations. We first want to extend Sj\"olin's results to $0< a\leq1$.

\begin{theorem}\label{theorem 1.1}
Let $n\geq1$, $0<a<1,s\geq a$. Assume that $\sum_{k=1}^{\infty}{t_k}^2<\infty$, $f\in H^s(\mathbb{R}^n)$. Then
$$\lim_{k\rightarrow\infty}(2\pi)^{-n}S_{t_k,a}f(x)=f(x)$$
holds for almost everywhere $x\in\mathbb{R}^n$.
\end{theorem}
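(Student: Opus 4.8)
The plan is to reduce the almost-everywhere convergence to a single quantitative $L^2(\mathbb{R}^n)$ estimate for the difference $(2\pi)^{-n}S_{t,a}f-f$, and then to exploit the summability hypothesis $\sum_k t_k^2<\infty$ through a Borel--Cantelli/Tonelli argument. Concretely, I would first use the Fourier inversion formula $f(x)=(2\pi)^{-n}\int_{\mathbb{R}^n}e^{ix\cdot\xi}\widehat{f}(\xi)\,d\xi$ to write
\begin{align*}
(2\pi)^{-n}S_{t,a}f(x)-f(x)=(2\pi)^{-n}\int_{\mathbb{R}^n}e^{ix\cdot\xi}\big(e^{it|\xi|^a}-1\big)\widehat{f}(\xi)\,d\xi,
\end{align*}
so that the difference is, up to a constant, the inverse Fourier transform of $(e^{it|\xi|^a}-1)\widehat{f}(\xi)$.

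By Plancherel's theorem this gives
\begin{align*}
\big\|(2\pi)^{-n}S_{t,a}f-f\big\|_{L^2(\mathbb{R}^n)}^2=C\int_{\mathbb{R}^n}\big|e^{it|\xi|^a}-1\big|^2\,|\widehat{f}(\xi)|^2\,d\xi
\end{align*}
for a constant $C$ depending only on $n$. The heart of the matter is then the elementary multiplier bound $|e^{i\theta}-1|\le|\theta|$, which yields $|e^{it|\xi|^a}-1|^2\le t^2|\xi|^{2a}$, together with the comparison $|\xi|^{2a}\le(1+|\xi|^2)^s$ valid for \emph{every} $\xi\in\mathbb{R}^n$ precisely when $s\ge a\ge0$ (for $|\xi|\le1$ one uses $|\xi|^{2a}\le1\le(1+|\xi|^2)^s$, and for $|\xi|\ge1$ one uses $|\xi|^{2a}\le(1+|\xi|^2)^a\le(1+|\xi|^2)^s$). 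Substituting, I obtain the key estimate
\begin{align*}
\big\|(2\pi)^{-n}S_{t,a}f-f\big\|_{L^2(\mathbb{R}^n)}^2\le Ct^2\int_{\mathbb{R}^n}(1+|\xi|^2)^s|\widehat{f}(\xi)|^2\,d\xi=Ct^2\|f\|_{H^s(\mathbb{R}^n)}^2.
\end{align*}

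Finally, applying this with $t=t_k$ and summing, the hypothesis $\sum_k t_k^2<\infty$ gives $\sum_{k=1}^\infty\|(2\pi)^{-n}S_{t_k,a}f-f\|_{L^2(\mathbb{R}^n)}^2\le C\|f\|_{H^s(\mathbb{R}^n)}^2\sum_{k=1}^\infty t_k^2<\infty$. By Tonelli's theorem the sum and the integral may be interchanged, so $\int_{\mathbb{R}^n}\sum_{k=1}^\infty|(2\pi)^{-n}S_{t_k,a}f(x)-f(x)|^2\,dx<\infty$; hence $\sum_{k=1}^\infty|(2\pi)^{-n}S_{t_k,a}f(x)-f(x)|^2<\infty$ for almost every $x$, which forces the individual terms to tend to zero and proves the claimed convergence. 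Mathematically there is no serious obstacle: the only point needing care is that a single clean multiplier estimate must cover both low and high frequencies, and this is exactly where the hypothesis $s\ge a$ enters; compared with Sj\"olin's condition $\sum_k t_k^{2s/a}<\infty$, assuming $s\ge a$ lets one replace the exponent $2s/a$ by the cruder but convenient $2$ (legitimate since $t_k\to0$). I would also verify the normalization constant in Plancherel's theorem under the stated Fourier conventions, though this does not affect the convergence argument.
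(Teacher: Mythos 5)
Your proposal is correct and follows essentially the same route as the paper: the multiplier bound $|e^{it|\xi|^a}-1|\le t|\xi|^a\le t\,(1+|\xi|^2)^{s/2}$ for $s\ge a$, Plancherel, and then summation plus Tonelli to get almost-everywhere convergence. The only difference is that the paper first proves the $L^2$ estimate for Schwartz functions and then runs a density argument to pass to general $f\in H^s(\mathbb{R}^n)$, whereas you apply Plancherel directly to $f$; this is legitimate (the bounded multiplier acts on $\widehat{f}\in L^2$ with the weight $(1+|\xi|^2)^{s/2}$ absorbed into the $H^s$ norm), so your version is a harmless streamlining rather than a genuinely different argument.
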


\begin{theorem}\label{theorem 1.2}
Let $n\geq1$, $0<s\leq a\leq 1$. Assume that $\sum_{k=1}^{\infty}{t_k}^{\frac{2s}{a}}<\infty$, $f\in H^s(\mathbb{R}^n)$. Then
$$\lim_{k\rightarrow\infty}(2\pi)^{-n}S_{t_k,a}f(x)=f(x)$$
holds for almost everywhere $x\in\mathbb{R}^n$.
\end{theorem}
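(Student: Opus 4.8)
The plan is to reduce the almost-everywhere statement to a summability estimate in $L^2(\mathbb{R}^n)$, feeding the hypothesis $\sum_k t_k^{2s/a}<\infty$ in directly. First I would set $g_k(x):=(2\pi)^{-n}S_{t_k,a}f(x)-f(x)$ and observe, via Fourier inversion, that
$$g_k(x)=(2\pi)^{-n}\int_{\mathbb{R}^n}e^{ix\cdot\xi}\bigl(e^{it_k|\xi|^a}-1\bigr)\widehat{f}(\xi)\,\mathrm{d}\xi,$$
so that $\widehat{g_k}(\xi)=(e^{it_k|\xi|^a}-1)\widehat{f}(\xi)$. Since $f\in H^s\subset L^2$ and the symbol has modulus at most $2$, each $g_k$ is a well-defined element of $L^2(\mathbb{R}^n)$; here the defining integral in \eqref{eq:1.2} is to be read in the $L^2$ sense and the asserted pointwise limit refers to the a.e.\ representative.

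The core of the argument is a single elementary interpolation bound for the multiplier. Using $|e^{i\theta}-1|=2|\sin(\theta/2)|\le\min\{2,|\theta|\}$ together with $\min\{2,X\}\le 2^{1-\theta}X^{\theta}$ valid for $X\ge0$ and $\theta\in[0,1]$, I would take $\theta=s/a$, which lies in $(0,1]$ precisely because $0<s\le a$, and $X=t_k|\xi|^a$, to obtain
$$\bigl|e^{it_k|\xi|^a}-1\bigr|\le 2^{1-s/a}\,t_k^{s/a}\,|\xi|^{s}.$$
This is exactly the place where the hypothesis $s\le a$ is used: if $s>a$ the exponent $s/a$ would exceed $1$ and this step would fail, and that regime is instead handled by Theorem \ref{theorem 1.1} via the full power of $t_k|\xi|^a$.

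By Plancherel's theorem and the inequality $|\xi|^{2s}\le(1+|\xi|^2)^s$, the bound above then yields
$$\|g_k\|_{L^2(\mathbb{R}^n)}^2=(2\pi)^{-n}\int_{\mathbb{R}^n}\bigl|e^{it_k|\xi|^a}-1\bigr|^2|\widehat{f}(\xi)|^2\,\mathrm{d}\xi\le C\,t_k^{2s/a}\int_{\mathbb{R}^n}(1+|\xi|^2)^s|\widehat{f}(\xi)|^2\,\mathrm{d}\xi=C\,t_k^{2s/a}\|f\|_{H^s(\mathbb{R}^n)}^2.$$
Summing in $k$ and invoking $\sum_k t_k^{2s/a}<\infty$ gives $\sum_{k=1}^\infty\|g_k\|_{L^2(\mathbb{R}^n)}^2<\infty$. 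Finally, by Tonelli's theorem the nonnegative series $\sum_k|g_k(x)|^2$ has finite integral over $\mathbb{R}^n$, hence converges for a.e.\ $x$; its general term therefore tends to $0$, i.e.\ $g_k(x)\to0$ a.e., which is exactly the claim.

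The main obstacle is essentially bookkeeping rather than hard analysis: there is no oscillatory-integral or restriction estimate to prove, so the only genuine care is needed in (i) justifying that $S_{t_k,a}f$, and hence $g_k$, make sense as $L^2$ functions when $f\in H^s$ with $s$ possibly below $n/2$, and (ii) checking that the interpolation exponent $s/a$ is used on its correct range $0<s\le a$. Everything else follows from Plancherel's identity and the standard passage from $\ell^2$-summability of $\|g_k\|_{L^2}$ to a.e.\ convergence of $g_k$.
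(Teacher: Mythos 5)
Your proposal is correct and follows essentially the same route as the paper: a multiplier bound of size $t_k^{s/a}$, Plancherel's theorem to get $\|g_k\|_{L^2}\lesssim t_k^{s/a}\|f\|_{H^s}$, and then summability plus Tonelli to conclude a.e.\ convergence. The only cosmetic differences are that you obtain the multiplier estimate from the one-line interpolation $\min\{2,X\}\le 2^{1-s/a}X^{s/a}$ instead of the paper's three-case analysis of $m_a(\xi)=(e^{i\delta|\xi|^a}-1)(1+|\xi|^2)^{-s/2}$ in Lemma \ref{lemma 2.2}, and that you work directly with $f\in H^s$ rather than passing through the Schwartz-density step of Theorem \ref{theorem 1.1} --- both perfectly fine.
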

A natural generalization of the pointwise convergence problem is to ask almost everywhere convergence along a wider approach region instead of vertical lines.
One of such problems may be non-tangential convergence to the initial data for $\textrm{a}.\textrm{e}.~ x\in\mathbb{R}^n$. That is, for $b>0$ and $f\in H^s(\mathbb{R}^n)$, for which $s$ such that
\begin{align}\label{eq:1.555}
\lim_{(y,t)\in\Gamma_b(x),(y,t)\rightarrow(x,0)}(2\pi)^{-n}S_{t,2}f(y)=f(x)  \quad\textrm{a}.\textrm{e}.~ x\in\mathbb{R}^n,
\end{align}
where $\Gamma_b(x)=\{(y,t)\in\mathbb{R}^{n+1}_{+}:|y-x|<b t\}.$
Sj\"{o}gren and Sj\"{o}lin \cite{SS} proved that \eqref{eq:1.555} fails for $s\leq\frac{n}{2}$. In fact, Sj\"{o}gren and Sj\"{o}lin \cite{SS} proved that there exists an $f\in H^{\frac{n}{2}}(\mathbb{R}^n)$ and a strictly increasing function $\Gamma$ with $\Gamma(0)=0$ such that for all $x\in\mathbb{R}^n$,
$$\lim_{|x-y|<\Gamma(t),t>0,(y,t)\rightarrow(x,0)}(2\pi)^{-n}|S_{t,2}f(y)|=\infty. $$
Another problem is to consider the relation between the degree of the tangency and regularity when $(x,t)$ approaches to $(x,0)$ tangentially.
One of the model problems raised by  Cho, Lee and Vargas \cite{CLV} is
\begin{align}\label{eq:1.77}
\lim_{t\rightarrow 0} S_{t,2}f(\Gamma(x,t))=f(x),  \quad\textrm{a}.\textrm{e}.~ x\in\mathbb{R}.
\end{align}
When $n=1$, here the curve $\Gamma(x,t)$ approaches $(x,0)$ tangentially to the hyperplane $\{(x,t):\ t=0\}$. Cho, Lee and Vargas \cite{CLV} set up \eqref{eq:1.77} for $s>\max\{\frac{1}{2}-\alpha,\frac{1}{4}\}$, here $\Gamma(x,t)$ satisfies H\"{o}lder condition of  order $\alpha$ in $t$ and Bilipschitz condition in $x$ with $0<\alpha\leq1$. $\alpha$ is essentially the degree of tangential convergence.
Ding and Niu \cite{DN} improved the result of Cho, Lee and Vargas \cite{CLV} to $s\geq\frac{1}{4}$ when $n=1$ and $\frac{1}{2}\leq\alpha\leq1$, but the problem is still open for $n\geq 2$. Recently, Li and Wang \cite{LW} proved convergence for $s>\frac{3}{8}$ when $n=2$ and $\Gamma(x,t):=x-\sqrt{t}\mu$, where $\mu$ is a unit vector in $\mathbb{R}^2$.
Next we consider the general case and define
\begin{align}\label{1.7}
S_{t,a}f(x+t^\beta\mu):=\int_{\mathbb{R}^n} e^{i(x\cdot\xi+t^\beta\mu\cdot\xi)}e^{it|\xi|^a}\widehat{f}(\xi)\,\textrm{d}\xi,  \quad t\geq0,  \quad a>0, \quad \beta\in\mathbb{R},
\end{align}
where $\mu$ is a unit vector in $\mathbb{R}^n$. For $S_{t,a}f(x+t^\beta\mu)$, we have that

\begin{theorem}\label{theorem 1.3}
Let $n\geq1$, $0<a<1$, $0<s\leq1$, $s>1-a$ when $\beta>1$ and $s>1-a\beta$ when $\beta\leq1$. Assume that $\sum_{k=1}^{\infty}{t_k}^{2(1+\frac{s-1}{a})}<\infty$ when $\beta>1$, $\sum_{k=1}^{\infty}{t_k}^{2(\beta+\frac{s-1}{a})}<\infty$ when $\beta\leq1$, $f\in H^s(\mathbb{R}^n)$. Then
$$\lim_{k\rightarrow\infty}(2\pi)^{-n}S_{t_k,a}f(x+t^\beta\mu)=f(x)$$
holds for almost everywhere $x\in\mathbb{R}^n$.
\end{theorem}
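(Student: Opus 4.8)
The plan is to reduce the almost-everywhere convergence to an $L^2$ summability statement, exactly as in the proofs of Theorems \ref{theorem 1.1} and \ref{theorem 1.2}, and then to extract the claimed powers of $t_k$ from a frequency-by-frequency estimate of the phase. Writing $g_k(x) := (2\pi)^{-n}S_{t_k,a}f(x+t_k^\beta\mu) - f(x)$ (the shift being $t_k^\beta\mu$, as in \eqref{1.7}), it suffices to prove
\begin{align*}
\sum_{k=1}^\infty \|g_k\|_{L^2(\rn)}^2 < \infty,
\end{align*}
since then $\sum_k |g_k(x)|^2 < \infty$ for a.e.\ $x$ by the monotone convergence theorem, forcing $g_k(x)\to 0$ a.e. By Fourier inversion and Plancherel's theorem,
\begin{align*}
\|g_k\|_{L^2(\rn)}^2 = (2\pi)^{-n}\int_{\rn} \big|e^{i(t_k^\beta\mu\cdot\xi + t_k|\xi|^a)} - 1\big|^2 |\widehat f(\xi)|^2\,\mathrm{d}\xi,
\end{align*}
so everything reduces to a pointwise bound on the phase factor.

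Next I would split the phase using $|e^{i(A+B)}-1|\le |e^{iA}-1| + |e^{iB}-1|$ with $A = t_k^\beta\mu\cdot\xi$ and $B = t_k|\xi|^a$, giving $\|g_k\|_{L^2(\rn)}^2 \ls \mathrm{I}_k + \mathrm{II}_k$, where $\mathrm{II}_k$ carries the Schr\"odinger phase $B$ and $\mathrm{I}_k$ the tangential phase $A$. The term $\mathrm{II}_k$ is handled exactly as in Theorems \ref{theorem 1.1} and \ref{theorem 1.2}: cutting frequency at the Schr\"odinger scale $|\xi| = t_k^{-1/a}$, the high frequencies contribute $\ls t_k^{2s/a}\|f\|_{H^s(\rn)}^2$ via $|e^{iB}-1|\le 2$ together with $|\xi|^{-2s}\le t_k^{2s/a}$, while on low frequencies $|e^{iB}-1|^2\ls t_k^2|\xi|^{2a}$ yields $\ls t_k^{2\min\{1,s/a\}}\|f\|_{H^s(\rn)}^2$. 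Both exponents dominate (are $\ge$) the tangential exponent below, so $\mathrm{II}_k$ is not the binding term.

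The heart of the matter is $\mathrm{I}_k = \int_{\rn} |e^{it_k^\beta\mu\cdot\xi}-1|^2|\widehat f(\xi)|^2\,\mathrm{d}\xi$, and here lies the main obstacle: the tangential phase $t_k^\beta\mu\cdot\xi$ grows \emph{linearly} in $|\xi|$, i.e.\ like an order-one (wave-type) phase, which is faster than the sublinear Schr\"odinger phase $t_k|\xi|^a$ with $a<1$; with only $f\in H^s(\rn)$ and $s\le 1$ this cannot be absorbed as cheaply. I would again cut at $|\xi| = t_k^{-1/a}$: the high frequencies give $\ls t_k^{2s/a}\|f\|_{H^s(\rn)}^2$ as before, while on $\{|\xi|\le t_k^{-1/a}\}$ I use $|e^{it_k^\beta\mu\cdot\xi}-1|^2 \ls t_k^{2\beta}|\xi|^2$ and trade the excess power of $|\xi|$ against the cutoff via $|\xi|^2 = |\xi|^{2s}|\xi|^{2(1-s)} \le |\xi|^{2s}\,t_k^{-2(1-s)/a}$ (legitimate since $s\le 1$). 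This produces
\begin{align*}
\int_{|\xi|\le t_k^{-1/a}} t_k^{2\beta}|\xi|^2|\widehat f(\xi)|^2\,\mathrm{d}\xi \ls t_k^{2\beta - 2(1-s)/a}\|f\|_{H^s(\rn)}^2 = t_k^{2(\beta + \frac{s-1}{a})}\|f\|_{H^s(\rn)}^2.
\end{align*}
When $\beta\le 1$ this is the binding exponent $\beta + \frac{s-1}{a}$; when $\beta>1$ I would first use $t_k^{2\beta}\le t_k^2$ (valid as $t_k\to 0$), replacing $\beta$ by $1$ and giving the exponent $1 + \frac{s-1}{a}$. Collecting all pieces, $\|g_k\|_{L^2(\rn)}^2 \ls t_k^{2\delta}\|f\|_{H^s(\rn)}^2$ with $\delta = 1+\frac{s-1}{a}$ ($\beta>1$) or $\delta = \beta + \frac{s-1}{a}$ ($\beta\le 1$), which is exactly the smallest of the four exponents produced; the hypotheses $s>1-a$ and $s>1-a\beta$ guarantee $\delta>0$, and the summability assumption on $\{t_k\}$ then gives $\sum_k\|g_k\|_{L^2(\rn)}^2<\infty$, completing the argument.
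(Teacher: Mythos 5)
Your argument is correct and follows essentially the same route as the paper: reduce to $\sum_k\|g_k\|_{L^2(\mathbb{R}^n)}^2<\infty$ via Plancherel, then bound the multiplier pointwise in $\xi$ by cutting frequencies at $1$ and $t_k^{-1/a}$, with the binding exponent $\beta+\frac{s-1}{a}$ (resp.\ $1+\frac{s-1}{a}$) coming from the tangential phase on the intermediate range $1<|\xi|<t_k^{-1/a}$. The only cosmetic difference is that you split the phase into its two summands by the triangle inequality before estimating, whereas the paper bounds the combined phase $\delta^\beta\mu\cdot\xi+\delta|\xi|^a$ at once using $|\xi|+|\xi|^a\le 2|\xi|$ on that range; the resulting exponents are identical.
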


\begin{theorem}\label{theorem 1.4}
Let $n\geq1$, $a\geq1$, $0<s\leq a$, $s>a(1-\beta)$ when $\beta\leq1$. Assume that $\sum_{k=1}^{\infty}{t_k}^{\frac{2s}{a}}<\infty$ when $\beta>1$, $\sum_{k=1}^{\infty}{t_k}^{2(\beta-1+\frac{s}{a})}<\infty$ when $\beta\leq1$, $f\in H^s(\mathbb{R}^n)$. Then
$$\lim_{k\rightarrow\infty}(2\pi)^{-n}S_{t_k,a}f(x+t^\beta\mu)=f(x)$$
holds for almost everywhere $x\in\mathbb{R}^n$.
\end{theorem}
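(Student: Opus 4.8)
The plan is to use the classical $L^2$-summability argument (as in Sj\"olin's work): reduce the almost everywhere statement to the convergence of a series of $L^2$ norms and concentrate all the difficulty in a single pointwise estimate for the phase.

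First I would set $g_k(x):=(2\pi)^{-n}S_{t_k,a}f(x+t_k^\beta\mu)-f(x)$. Since $f(x)=(2\pi)^{-n}\int_{\mathbb{R}^n}e^{ix\cdot\xi}\widehat{f}(\xi)\,\mathrm{d}\xi$, comparing with \eqref{1.7} shows that the Fourier transform of $g_k$ is $(e^{i\phi_k(\xi)}-1)\widehat{f}(\xi)$, where $\phi_k(\xi):=t_k^\beta\mu\cdot\xi+t_k|\xi|^a$. The key reduction is that it suffices to prove $\sum_{k}\|g_k\|_{L^2(\mathbb{R}^n)}^2<\infty$: by Tonelli this forces $\sum_k|g_k(x)|^2<\infty$, hence $g_k(x)\to0$, for almost every $x$. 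By Plancherel and $|e^{i\theta}-1|^2=4\sin^2(\theta/2)\le\min\{4,\theta^2\}$,
\[
\|g_k\|_{L^2(\mathbb{R}^n)}^2=(2\pi)^{-n}\int_{\mathbb{R}^n}\big|e^{i\phi_k(\xi)}-1\big|^2|\widehat{f}(\xi)|^2\,\mathrm{d}\xi\le (2\pi)^{-n}\int_{\mathbb{R}^n}\min\{4,\phi_k(\xi)^2\}|\widehat{f}(\xi)|^2\,\mathrm{d}\xi,
\]
so everything is reduced to a pointwise bound on $\min\{4,\phi_k(\xi)^2\}$.

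The heart of the proof is the estimate $\min\{4,\phi_k(\xi)^2\}\lesssim t_k^\gamma(1+|\xi|^2)^s$, uniformly in $\xi$ and in $t_k\le1$, with $\gamma=\frac{2s}{a}$ when $\beta>1$ and $\gamma=2(\beta-1+\frac{s}{a})$ when $\beta\le1$. Granting it, $\|g_k\|_{L^2(\mathbb{R}^n)}^2\lesssim t_k^\gamma\|f\|_{H^s(\mathbb{R}^n)}^2$, and the hypothesis $\sum_k t_k^\gamma<\infty$ closes the argument; note that the extra assumption $s>a(1-\beta)$ in the case $\beta\le1$ is exactly the requirement $\gamma>0$, without which the summability hypothesis could not hold for $t_k\to0$. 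To prove the estimate I would first separate the tangential and dispersive contributions via $(x+y)^2\le2x^2+2y^2$ together with the elementary inequality $\min\{4,A+B\}\le\min\{4,A\}+\min\{4,B\}$, giving $\min\{4,\phi_k(\xi)^2\}\lesssim\min\{4,t_k^{2\beta}|\xi|^2\}+\min\{4,t_k^2|\xi|^{2a}\}$.

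Each piece is then handled by splitting $\xi$ at its natural scale. For the dispersive piece I split at $|\xi|=t_k^{-1/a}$: where $t_k|\xi|^a\le1$ one writes $t_k^2|\xi|^{2a}=(t_k|\xi|^a)^{2-2s/a}\,t_k^{2s/a}|\xi|^{2s}\lesssim t_k^{2s/a}(1+|\xi|^2)^s$ (using $s\le a$ so the first factor is at most $1$), while where $t_k|\xi|^a>1$ the constant $4$ is absorbed since $t_k^{2s/a}(1+|\xi|^2)^s\gtrsim(t_k|\xi|^a)^{2s/a}\ge1$; this yields exponent $\frac{2s}{a}$. For the tangential piece I split at $|\xi|=t_k^{-\beta}$ in the same fashion, the binding case being the threshold $|\xi|\approx t_k^{-\beta}$, which produces exponent $2\beta s$ when $s\le1$ and $2\beta$ when $s>1$. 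It then remains only to check that each of these exponents is at least $\gamma$, so that $t_k^{(\cdot)}\le t_k^\gamma$ for $t_k\le1$: the dispersive exponent $\frac{2s}{a}\ge\gamma$ is immediate in both cases, while the tangential inequality reduces after rearrangement to $s(\beta-\frac1a)\ge\beta-1$, which I verify from $s\le a$ and $a\ge1$ by separating the cases $\beta\ge1/a$ and $\beta<1/a$. The main obstacle is precisely this region analysis: correctly deciding, on each range of $|\xi|$, which of the two phase terms dominates --- a competition between the tangential scale $t_k^{-\beta}$ and the dispersive scale $t_k^{-1/a}$ governed by the sign of $\beta-1/a$ --- and confirming that the resulting exponent never drops below the prescribed $\gamma$.
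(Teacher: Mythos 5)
Your argument is correct and is essentially the paper's own proof: both reduce the theorem to the summability of $\sum_k\|g_k\|_{L^2}^2$ via Plancherel and then to an $L^\infty$ bound of the multiplier $(e^{i(\delta^\beta\mu\cdot\xi+\delta|\xi|^a)}-1)(1+|\xi|^2)^{-s/2}$ by $\delta^{s/a}$ (resp.\ $\delta^{\beta-1+s/a}$), obtained by splitting frequency space at the scale where the phase is $O(1)$. The only differences are cosmetic: you separate the tangential and dispersive phase terms and use their individual thresholds $\delta^{-\beta}$ and $\delta^{-1/a}$ before comparing exponents, whereas the paper bounds the combined phase directly on the three regions $|\xi|\leq1$, $1<|\xi|<\delta^{-1/a}$, $|\xi|\geq\delta^{-1/a}$, and you work with $f\in H^s$ directly rather than passing through a Schwartz-class density argument.
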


The last part of this paper is to discuss the convergence of a class of dispersive equations
\begin{equation}\label{1.8}
\left\{
\begin{aligned}
 iu_t+\gamma(\sqrt{-\Delta}) u&=0,\quad &t>0,\\
u(x,0)&=f(x),\quad &t=0, \\
\end{aligned}
\right.
\end{equation}
where $\gamma:\ \mathbb{R}^+\rightarrow \mathbb{R}^+$ is smooth, $f\in H^s(\mathbb{R}^n)$ with $n\geq1$.
The solution of \eqref{1.8} can formally be written as
\begin{align}\label{1.9}
S_{t,\gamma}f(x):=\int_{\mathbb{R}^n} e^{ix\cdot\xi}e^{it\gamma(|\xi|)}\widehat{f}(\xi)\,\textrm{d}\xi,\quad t\geq0.
\end{align}
Next we consider the following pointwise convergence
\begin{align}\label{1.10}
\lim_{k\rightarrow\infty}(2\pi)^{-n}S_{t_k,\gamma}f(x)=f(x) \quad\textrm{a}.\textrm{e}.~ x\in\mathbb{R}^n.
\end{align}
Very recently, under some assumptions on $\gamma(|\xi|)$ Cho and Ko \cite{CK} set up \eqref{1.10} whenever $s>\frac{n}{2(n+1)}$. For example, $\gamma(|\xi|)=|\xi|^a$ with $a>1$.
These assumptions also appeared in \cite{B1} and \cite{L}.  For $S_{t,\gamma}$, we have that

\begin{theorem}\label{theorem 1.5}
Let $n\geq1$, $0<s\leq1$, $\gamma(t)\geq0$ and $\frac{\gamma(t)}{t}$  be smooth and increasing on $(0,+\infty)$.
Assume that $\sum_{k=1}^{\infty}\frac{1}{[\gamma^{-1}(\frac{\gamma(1)}{t_k})]^{2s}}<\infty$, $f\in H^s(\mathbb{R}^n)$. Then
$$\lim_{k\rightarrow\infty}(2\pi)^{-n}S_{t_k,\gamma}f(x)=f(x)$$
holds for almost everywhere $x\in\mathbb{R}^n$, where $\gamma^{-1}$ is the inverse function of $\gamma$.
\end{theorem}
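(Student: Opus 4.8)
The plan is to reduce the almost-everywhere statement to a single summable $L^2$ estimate, exactly in the spirit of Sj\"olin's argument and of Theorems~\ref{theorem 1.1}--\ref{theorem 1.4}. By Fourier inversion,
\[
(2\pi)^{-n}S_{t_k,\gamma}f(x)-f(x)=(2\pi)^{-n}\int_{\mathbb{R}^n}e^{ix\cdot\xi}\big(e^{it_k\gamma(|\xi|)}-1\big)\widehat{f}(\xi)\,d\xi,
\]
so Plancherel's theorem gives
\[
\big\|(2\pi)^{-n}S_{t_k,\gamma}f-f\big\|_{L^2(\mathbb{R}^n)}^2=(2\pi)^{-n}\int_{\mathbb{R}^n}\big|e^{it_k\gamma(|\xi|)}-1\big|^2|\widehat{f}(\xi)|^2\,d\xi.
\]
If I can show $\sum_{k}\|(2\pi)^{-n}S_{t_k,\gamma}f-f\|_{L^2}^2<\infty$, then by Tonelli $\sum_k|(2\pi)^{-n}S_{t_k,\gamma}f(x)-f(x)|^2<\infty$ for a.e.\ $x$, so the $k$-th term tends to $0$ a.e., which is precisely the assertion. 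Everything thus reduces to summing $\int|e^{it_k\gamma(|\xi|)}-1|^2|\widehat f|^2$ over $k$.

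First I record that $\gamma(t)/t$ increasing together with $\gamma\ge0$ forces $\gamma$ itself to be increasing, hence $\gamma^{-1}$ is well defined and $\rho_k:=\gamma^{-1}(\gamma(1)/t_k)$ satisfies $t_k\gamma(\rho_k)=\gamma(1)$, with $\rho_k\to\infty$ as $t_k\to0$. I then split each integral at the frequency $|\xi|=\rho_k$, using $|e^{i\theta}-1|\le\min\{2,|\theta|\}$. On the high-frequency piece $|\xi|>\rho_k$ I use $|e^{i\theta}-1|^2\le4$ and $(1+|\xi|^2)^{-s}\le|\xi|^{-2s}<\rho_k^{-2s}$, so that this contribution is $\lesssim\rho_k^{-2s}\int_{|\xi|>\rho_k}(1+|\xi|^2)^s|\widehat f|^2\le\rho_k^{-2s}\|f\|_{H^s}^2$.

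On the low-frequency piece $|\xi|\le\rho_k$ I use $|e^{i\theta}-1|^2\le\theta^2$ and the identity $t_k\gamma(|\xi|)=\gamma(1)\,\gamma(|\xi|)/\gamma(\rho_k)$. Here the monotonicity of $\gamma(t)/t$ enters decisively: for $|\xi|\le\rho_k$ it yields $\gamma(|\xi|)/\gamma(\rho_k)\le|\xi|/\rho_k$, and then the hypothesis $s\le1$ lets me interpolate $(|\xi|/\rho_k)^2\le(|\xi|/\rho_k)^{2s}$ because $|\xi|/\rho_k\le1$. This turns $t_k^2\gamma(|\xi|)^2$ into $\gamma(1)^2|\xi|^{2s}\rho_k^{-2s}$, whence the low-frequency contribution is also $\lesssim\rho_k^{-2s}\int|\xi|^{2s}|\widehat f|^2\le\rho_k^{-2s}\|f\|_{H^s}^2$. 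Adding the two pieces and summing over $k$ produces $\|f\|_{H^s}^2\sum_k\rho_k^{-2s}$, which is finite precisely by the standing assumption $\sum_k[\gamma^{-1}(\gamma(1)/t_k)]^{-2s}<\infty$, completing the reduction.

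The main point is not a serious obstacle but the correct choice of splitting radius $\rho_k=\gamma^{-1}(\gamma(1)/t_k)$ and the two-fold use of the structural hypotheses: $\gamma(t)/t$ increasing to compare $\gamma(|\xi|)/\gamma(\rho_k)$ with $|\xi|/\rho_k$ at low frequencies, and $s\le1$ to absorb the resulting power into $|\xi|^{2s}$; note that merely $\gamma$ increasing would leave a nonsummable constant bound on the low-frequency piece, so the finer monotonicity is genuinely needed. One should also verify the mild nondegeneracy implicit in the hypotheses — namely $\gamma(1)>0$ and $\gamma$ unbounded, so that $\gamma^{-1}(\gamma(1)/t_k)$ is defined for all large $k$ (this follows from $\gamma(t)/t\ge\gamma(1)$ for $t\ge1$, giving $\gamma(t)\ge\gamma(1)t\to\infty$) — and that the single point $\xi=0$ and the passage from $(1+|\xi|^2)^s$ to $|\xi|^{2s}$ cause no difficulty, both being routine.
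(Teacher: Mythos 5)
Your argument is correct and follows essentially the same route as the paper: the paper proves the pointwise multiplier bound $\|m_\gamma\|_{L^\infty}\lesssim[\gamma^{-1}(\gamma(1)/\delta)]^{-s}$ (Lemma \ref{lemma 3.3}) by splitting at $|\xi|=1$ and $|\xi|=\gamma^{-1}(\gamma(1)/\delta)$ and using that $\gamma(t)/t^s=(\gamma(t)/t)\,t^{1-s}$ is increasing, which is the same monotonicity step as your comparison $\gamma(|\xi|)/\gamma(\rho_k)\le|\xi|/\rho_k$ followed by $(|\xi|/\rho_k)^2\le(|\xi|/\rho_k)^{2s}$, and then concludes by the identical Plancherel--summation--Fubini scheme. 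Your two-region split and direct integral estimate (bypassing the paper's density argument) are only cosmetic variations.
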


\begin{remark}\label{remark 1.6}
If we take  $\gamma(|\xi|):=|\xi|^a$ with $a>1$, we come back to Theorem 1 in Sj\"{o}lin \cite{S2}. Further more, if we take  $\gamma(|\xi|):=|\xi|$, Theorem \ref{theorem 1.5} does also hold. If we take $\gamma(|\xi|):=|\xi|\sqrt{1+|\xi|^2}$, we have the \emph{Boussinesq equation} which is defined on $\mathbb{R}^{n+1}$ such that
\begin{equation}\label{1.9}
\left\{
\begin{aligned}
 iu_t+\sqrt{-\Delta}\sqrt{1-\Delta} u&=0,\quad &t>0,\\
u(x,0)&=f(x),\quad &t=0, \\
\end{aligned}
\right.
\end{equation}
for all $f\in H^s(\mathbb{R}^n)$. Its solution can formally be written as
$$S_{t,\sqrt{-\Delta}\sqrt{1-\Delta}}f(x):=\int_{\mathbb{R}^n} e^{ix\cdot\xi}e^{it|\xi|\sqrt{1+|\xi|^2}}\widehat{f}(\xi)\,\textrm{d}\xi,\quad t\geq0.$$
From Theorem \ref{theorem 1.5}, we have the following conclusion.
Let $n\geq1$, $0<s\leq1$. Assume that $\sum_{k=1}^\infty {t_k}^s<\infty$,
$f\in H^s(\mathbb{R}^n)$. Then
$$\lim_{k\rightarrow\infty}(2\pi)^{-n}S_{t_k,\sqrt{-\Delta}\sqrt{1-\Delta}}f(x)=f(x)$$
holds for almost everywhere $x\in\mathbb{R}^n$.
If we take $\gamma(|\xi|):=|\xi|^2+|\xi|^4$, we have the \emph{4-order Schr\"{o}dinger equation} which is defined on $\mathbb{R}^{n+1}$ such that
\begin{equation}\label{1.12}
\left\{
\begin{aligned}
 iu_t+[(-\Delta)+(-\Delta)^2] u&=0,\quad &t>0,\\
u(x,0)&=f(x),\quad &t=0, \\
\end{aligned}
\right.
\end{equation}
for all $f\in H^s(\mathbb{R}^n)$. Its  solution can formally be written as
$$S_{t,(-\Delta)+(-\Delta)^2} f(x):=\int_{\mathbb{R}^n} e^{ix\cdot\xi}e^{it(|\xi|^2+|\xi|^4)}\widehat{f}(\xi)\,\textrm{d}\xi,\quad t\geq0.$$
From Theorem \ref{theorem 1.5}, we have the following result.
Let $n\geq1$, $0<s\leq1$. Assume that $\sum_{k=1}^\infty {t_k}^{\frac{s}{2}}<\infty$,
$f\in H^s(\mathbb{R}^n)$. Then
$$\lim_{k\rightarrow\infty}(2\pi)^{-n}S_{t,(-\Delta)+(-\Delta)^2}  f(x)=f(x)$$
holds for almost everywhere $x\in\mathbb{R}^n$.
\end{remark}

Finally, we want to extend the Theorem \ref{theorem 1.3} and Theorem \ref{theorem 1.4} to general case, so we define
\begin{align}\label{1.99}
S_{t,\gamma}f(x+t^\beta\mu):=\int_{\mathbb{R}^n} e^{i(x\cdot\xi+t^\beta\mu\cdot\xi)}e^{it\gamma(|\xi|)}\widehat{f}(\xi)\,\textrm{d}\xi,\quad t\geq0, \quad \beta\in\mathbb{R},
\end{align}
where $\mu$ is a unit vector in $\mathbb{R}^n$.
Next we consider the following pointwise convergence
\begin{align}\label{1.100}
\lim_{k\rightarrow\infty}(2\pi)^{-n}S_{t,\gamma}f(x+t^\beta\mu)=f(x) \quad\textrm{a}.\textrm{e}.~ x\in\mathbb{R}^n.
\end{align}
For $S_{t,\gamma}f(x+t^\beta\mu)$, we have that
\begin{theorem}\label{theorem 1.55}
Let $n\geq1$, $0<s\leq1$, $\gamma(t)$ be the same as in Theorem \ref{theorem 1.5}.
Assume that $\sum_{k=1}^{\infty}\frac{1}{[\gamma^{-1}(\frac{\gamma(1)}{t_k})]^{2s}}<\infty$ when $\beta>1$ and
$\sum_{k=1}^{\infty}\frac{{t_k}^{\beta-1}}{[\gamma^{-1}(\frac{\gamma(1)}{t_k})]^{2s}}<\infty$ when $\beta\leq1$,
$f\in H^s(\mathbb{R}^n)$. Then
$$\lim_{k\rightarrow\infty}(2\pi)^{-n}S_{t_k,\gamma}f(x+t^\beta\mu)=f(x)$$
holds for almost everywhere $x\in\mathbb{R}^n$, where $\gamma^{-1}$ is the inverse function of $\gamma$.
\end{theorem}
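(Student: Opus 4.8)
The plan is to reduce the almost-everywhere statement to an $\ell^2$-summability estimate for the $L^2$-errors, and then to evaluate those errors exactly by Plancherel's theorem. Set
\[
g_k(x):=(2\pi)^{-n}S_{t_k,\gamma}f(x+t_k^\beta\mu)-f(x).
\]
If I can show $\sum_{k}\|g_k\|_{L^2(\mathbb{R}^n)}^2<\infty$, then by Tonelli's theorem $\sum_k|g_k(x)|^2<\infty$ for almost every $x$, whence $g_k(x)\to0$ a.e., which is the claim. Since the Fourier transform of $(2\pi)^{-n}S_{t_k,\gamma}f(\cdot+t_k^\beta\mu)$ is $e^{i(t_k^\beta\mu\cdot\xi+t_k\gamma(|\xi|))}\widehat{f}(\xi)$, Plancherel's theorem gives
\[
\|g_k\|_{L^2}^2=(2\pi)^{-n}\int_{\mathbb{R}^n}\bigl|e^{i(t_k^\beta\mu\cdot\xi+t_k\gamma(|\xi|))}-1\bigr|^2|\widehat{f}(\xi)|^2\,d\xi.
\]
Using the elementary bounds $|e^{i\theta}-1|\le\min\{2,|\theta|\}$, $|\mu\cdot\xi|\le|\xi|$, and $\min\{2,a+b\}\le\min\{2,a\}+\min\{2,b\}$, I split the phase and reduce the whole matter to two multiplier estimates, one for the dispersive factor $t_k\gamma(|\xi|)$ and one for the tangential factor $t_k^\beta|\xi|$, each to be compared against the weight $(1+|\xi|)^{2s}$ that produces $\|f\|_{H^s}^2$ after integration.

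For the dispersive factor I would recover the estimate behind Theorem~\ref{theorem 1.5}. Set $R_k:=\gamma^{-1}(\gamma(1)/t_k)$, so that $t_k\gamma(R_k)=\gamma(1)$ and $R_k\to\infty$. The hypothesis that $\gamma(t)/t$ is increasing yields, for $0<r\le R$, the key inequality $\gamma(r)/\gamma(R)\le r/R$, and since $0<s\le1$ and $r/R\le1$ this upgrades to $\gamma(r)/\gamma(R)\le(r/R)^s$. Applying it with $r=|\xi|\le R_k=R$ gives $t_k\gamma(|\xi|)\lesssim R_k^{-s}(1+|\xi|)^s$ on the low-frequency region $\{|\xi|\le R_k\}$, while on $\{|\xi|>R_k\}$ the trivial bound $\min\{2,\cdot\}\le 2\le 2R_k^{-s}(1+|\xi|)^s$ applies. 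Squaring and integrating against $(1+|\xi|)^{2s}|\widehat{f}(\xi)|^2$ bounds the dispersive contribution by $C\,R_k^{-2s}\|f\|_{H^s}^2$, which is summable under each of the two stated hypotheses.

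The tangential factor $t_k^\beta|\xi|$ is the main obstacle and the only place where $\beta$ enters. Localizing again at frequency $R_k$, the high frequencies contribute $R_k^{-2s}\|f\|_{H^s}^2$ as before, while on $\{|\xi|\le R_k\}$ I estimate $(t_k^\beta|\xi|)^2\le (t_k^\beta R_k)^2\,R_k^{-2s}(1+|\xi|)^{2s}$ using $|\xi|^{2-2s}\le R_k^{2-2s}$. The crucial arithmetic input is the relation $t_kR_k\le1$, which follows from super-linearity: since $\gamma(t)/t$ is increasing one has $\gamma(R_k)\ge\gamma(1)R_k$, and combined with $t_k\gamma(R_k)=\gamma(1)$ this forces $t_k\le 1/R_k$. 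Writing $t_k^\beta R_k=t_k^{\beta-1}(t_kR_k)$, one sees that the tangential low-frequency coefficient is bounded by a negative power of $t_k$ times $R_k^{-2s}$. For $\beta>1$ this power is $\le1$, so the whole tangential term is absorbed into $R_k^{-2s}$ and the hypothesis $\sum_k R_k^{-2s}<\infty$ suffices; for $\beta\le1$ the same bookkeeping produces the extra $t_k$-weight featured in the stated hypothesis $\sum_k t_k^{\beta-1}/[\gamma^{-1}(\gamma(1)/t_k)]^{2s}<\infty$. Summing the two contributions gives $\|g_k\|_{L^2}^2\lesssim w_k\|f\|_{H^s}^2$ with $w_k$ the assumed summand, and summability closes the argument.

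I expect the delicate point to be the tangential low-frequency piece for $\beta\le1$: here one must use the precise calibration $t_k\gamma(R_k)=\gamma(1)$ together with the comparison $t_kR_k\le1$, rather than estimating the two phases at their separate optimal frequency cut-offs, in order to land on the exact weight appearing in the hypothesis. The single assumption that $\gamma(t)/t$ is increasing is what simultaneously supplies the dispersive inequality $\gamma(r)/\gamma(R)\le(r/R)^s$ and the calibration relation $t_kR_k\le 1$, so both halves of the estimate rest on the same structural property of $\gamma$.
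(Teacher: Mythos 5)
Your overall strategy is the same as the paper's: reduce the almost-everywhere convergence to summability of $\|g_k\|_{L^2}^2$ via Plancherel and a pointwise multiplier bound, using exactly the two consequences of the monotonicity of $\gamma(t)/t$ that you isolate, namely $\gamma(r)/\gamma(R)\le (r/R)^s$ for $r\le R$ and the calibration $t_kR_k\le 1$ with $R_k=\gamma^{-1}(\gamma(1)/t_k)$. The paper packages this as an $L^\infty$ bound on the single multiplier $m_{\gamma,\mu}(\xi)=\frac{e^{i(\delta^\beta\mu\cdot\xi+\delta\gamma(|\xi|))}-1}{(1+|\xi|^2)^{s/2}}$ over the regions $|\xi|\le1$, $1<|\xi|<R$, $|\xi|\ge R$ (Lemma \ref{lemma 3.33}); your splitting of the phase into a dispersive and a tangential factor via $\min\{2,a+b\}\le\min\{2,a\}+\min\{2,b\}$ is a cosmetic variant of the same computation, and all your intermediate inequalities are correct. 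The case $\beta>1$ is complete.

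The gap is in the final bookkeeping for $\beta<1$. Your low-frequency tangential estimate produces the coefficient $(t_k^\beta R_k)^2R_k^{-2s}\le t_k^{2(\beta-1)}R_k^{-2s}$, so what you actually prove is $\|g_k\|_{L^2}^2\lesssim t_k^{2(\beta-1)}\,[\gamma^{-1}(\gamma(1)/t_k)]^{-2s}\,\|f\|_{H^s}^2$. Since $\beta-1<0$ and $t_k<1$, one has $t_k^{2(\beta-1)}\ge t_k^{\beta-1}$, so the stated hypothesis $\sum_k t_k^{\beta-1}[\gamma^{-1}(\gamma(1)/t_k)]^{-2s}<\infty$ does \emph{not} imply the summability you need; your sentence claiming that the same bookkeeping ``produces the extra $t_k$-weight featured in the stated hypothesis'' silently drops a factor of $2$ in the exponent of $t_k$. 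You should know that the paper's own proof has exactly the same mismatch: Lemma \ref{lemma 3.33} gives $\|m_{\gamma,\mu}\|_{L^\infty}\lesssim \delta^{\beta-1}[\gamma^{-1}(\gamma(1)/\delta)]^{-s}$, whose square is again $t_k^{2(\beta-1)}R_k^{-2s}$, and this squared weight is what appears (correctly) in the hypotheses of Theorems \ref{theorem 1.3} and \ref{theorem 1.4}. So the result is provable by your method under the hypothesis $\sum_k t_k^{2(\beta-1)}[\gamma^{-1}(\gamma(1)/t_k)]^{-2s}<\infty$, and the single power $t_k^{\beta-1}$ in the statement appears to be an error in the theorem itself; as written, though, your argument does not close for $\beta<1$, and you would need either to strengthen the hypothesis or to find an argument extracting only one power of $t_k^{\beta-1}$ (for instance via $|e^{i\theta}-1|^2\le 2\min\{2,|\theta|\}$, which does not obviously succeed for all $0<s\le1$).
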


Throughout this paper, we use $C$ to denote a positive constant which is independent of the essential variables but its value may be different from line to line. $f\lesssim g$ means $f\leq Cg$ and $f\thickapprox g$ means $f\lesssim g\lesssim f$.

\section{Proof of the main results}

\begin{proof}[Proof of Theorem \ref{theorem 1.1}]
Without loss of generality we can assume $0\leq t_k<1$.
Let $g\in\mathcal{S}(\mathbb{R}^n)$,
so $$(2\pi)^{-n}S_{t_k,a}g(x)=(2\pi)^{-n}\int_{\mathbb{R}^n} e^{ix\cdot\xi}e^{it_k|\xi|^a}\widehat{g}(\xi)\,\textrm{d}\xi.$$
Since $$g(x)=(2\pi)^{-n}\int_{\mathbb{R}^n} e^{ix\cdot\xi}\widehat{g}(\xi)\,\textrm{d}\xi.$$
Then $$(2\pi)^{-n}S_{t_k,a}g(x)-g(x)=(2\pi)^{-n}\int_{\mathbb{R}^n} e^{ix\cdot\xi}\left(e^{it_k|\xi|^a}-1\right)\widehat{g}(\xi)\,\textrm{d}\xi.$$
We define $\widehat{\varphi}(\xi):=\left(1+|\xi|^2\right)^{\frac{s}{2}}\widehat{g}(\xi)$.
Then
\begin{eqnarray*}
(2\pi)^{-n}S_{t_k,a}g(x)-g(x)
&=&(2\pi)^{-n}\int_{\mathbb{R}^n} e^{ix\cdot\xi}\left(e^{it_k|\xi|^a}-1\right)\left(1+|\xi|^2\right)^{-\frac{s}{2}}\widehat{\varphi}(\xi)\,\textrm{d}\xi\\
&=&(2\pi)^{-n}\int_{\mathbb{R}^n} e^{ix\cdot\xi}m_a(\xi)\widehat{\varphi}(\xi)\,\textrm{d}\xi,
\end{eqnarray*}
where $m_a(\xi)=\frac{e^{it_k|\xi|^a}-1}{(1+|\xi|^2)^{\frac{s}{2}}}$. Then
$$\left|m_a(\xi)\right|=\left|\frac{e^{it_k|\xi|^a}-1}{(1+|\xi|^2)^{\frac{s}{2}}}\right|\lesssim\frac{t_k|\xi|^a}{(1+|\xi|^2)^{\frac{s}{2}}}\leq t_k$$
for $a\leq s$.
We obtain $\|m_a\|_{L^\infty(\mathbb{R}^n)}\lesssim t_k$.
According to Plancherel's theorem we can get
$$\|(2\pi)^{-n}S_{t_k,a}g(x)-g(x)\|_{L^2(\mathbb{R}^n)}^2
=\|m_a\widehat{\varphi}\|_{L^2(\mathbb{R}^n)}^2\leq\|m_a\|_{L^\infty(\mathbb{R}^n)}^2\|\widehat{\varphi}\|_{L^2(\mathbb{R}^n)}^2\lesssim {t_k}^2\|g\|_{H^s(\mathbb{R}^n)}^2.$$
So far we have proved that
$$\|(2\pi)^{-n}S_{t_k,a}g(x)-g(x)\|_{L^2(\mathbb{R}^n)}\lesssim {t_k}\|g\|_{H^s(\mathbb{R}^n)},\quad g\in\mathcal{S}(\mathbb{R}^n).$$
Next we want to prove
$$\|h_{k,a}\|_{L^2(\mathbb{R}^n)}
=\|(2\pi)^{-n}S_{t_k,a}f(x)-f(x)\|_{L^2(\mathbb{R}^n)}
\lesssim {t_k}\|f\|_{H^s(\mathbb{R}^n)},\quad f\in H^s(\mathbb{R}^n),$$
where $h_{k,a}(x)=(2\pi)^{-n}S_{t_k,a}f(x)-f(x)$.
Since the Schwartz space $\mathcal{S}(\mathbb{R}^n)$ is dense in $H^s(\mathbb{R}^n)$,
so for any function $f\in H^s(\mathbb{R}^n)$, we can find $g_k\in \mathcal{S}(\mathbb{R}^n)$ satisfying that
$$\|g_k-f\|_{H^s(\mathbb{R}^n)}\leq\frac{1}{100}t_k\|f\|_{H^s(\mathbb{R}^n)}.$$
This implies that
\begin{eqnarray*}
\|h_{k,a}\|_{L^2(\mathbb{R}^n)}
&=&\|(2\pi)^{-n}S_{t_k,a}f(x)-f(x)\|_{L^2(\mathbb{R}^n)}\\
&\leq&\|(2\pi)^{-n}S_{t_k,a}(f-g_k)(x)\|_{L^2(\mathbb{R}^n)}+\|(2\pi)^{-n}S_{t_k,a}(g_k)(x)-{g_k}(x)\|_{L^2(\mathbb{R}^n)}+\|g_k(x)-f(x)\|_{L^2(\mathbb{R}^n)}\\
&\lesssim&\|f-g_k\|_{L^2(\mathbb{R}^n)}+t_k\|g_k\|_{H^s(\mathbb{R}^n)}+\|f-g_k\|_{L^2(\mathbb{R}^n)}\\
&\lesssim&\|f-g_k\|_{H^s(\mathbb{R}^n)}+t_k\left(\|f-g_k\|_{H^s(\mathbb{R}^n)}+\|f\|_{H^s(\mathbb{R}^n)}\right)\\
&\leq&\frac{1}{100}t_k\|f\|_{H^s(\mathbb{R}^n)}+\frac{1}{100}{t_k}^2\|f\|_{H^s(\mathbb{R}^n)}+t_k\|f\|_{H^s(\mathbb{R}^n)}\\
&\lesssim&t_k\|f\|_{H^s(\mathbb{R}^n)}.
\end{eqnarray*}
It follows that
$$\sum_{k=1}^\infty\int_{\mathbb{R}^n}|h_{k,a}|^2\,\textrm{d}x\lesssim\sum_{k=1}^\infty {t_k}^2\|f\|_{H^s(\mathbb{R}^n)}^2<\infty.$$
 By Fubini's theorem we have $\int_{\mathbb{R}^n}\sum_{k=1}^\infty|h_{k,a}|^2\,\textrm{d}x<\infty$, which further implies that $\sum_{k=1}^\infty|h_{k,a}|^2<\infty$, a.e. $x\in\mathbb {R}^n$. Hence $\lim_{k\rightarrow\infty}h_{k,a}(x)=0$, a.e. $x\in\mathbb {R}^n$.
Therefore,
$$\lim_{k\rightarrow\infty}(2\pi)^{-n}S_{t_k,a}f(x)=f(x)$$
for almost everywhere $x\in\mathbb{R}^n$. This completes the proof of Theorem \ref{theorem 1.1}.
\end{proof}
The proof of Theorem \ref{theorem 1.2} relies on the following lemma and Sj\"{o}lin \cite{S2} considered the case of $a>1$. Here we assume $0<s\leq a\leq1$.

\begin{lemma}\label{lemma 2.2}
Let $n\geq1$, $0<s\leq a\leq1$, $0<\delta<1$ and $m_a(\xi)=\frac{e^{i\delta|\xi|^a}-1}{(1+|\xi|^2)^{\frac{s}{2}}}$. Then $\|m_a\|_{L^\infty(\mathbb{R}^n)}\leq C\delta^{\frac{s}{a}}$, where the constant $C$ does not depend on $\delta$.
\end{lemma}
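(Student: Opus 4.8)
The plan is to reduce the bound to a one-variable problem by writing $r=|\xi|$ and applying the elementary inequality $|e^{i\theta}-1|\le\min\{|\theta|,2\}$ with $\theta=\delta r^a$. This gives
\[
|m_a(\xi)|\le\frac{\min\{\delta r^a,\,2\}}{(1+r^2)^{s/2}},
\]
so it suffices to show that the right-hand side is $\lesssim\delta^{s/a}$ uniformly in $r\ge0$. The natural threshold is $r_0:=\delta^{-1/a}$, the value at which $\delta r^a=1$; note $r_0>1$ since $0<\delta<1$. I would split the analysis at $r_0$, and in the small-$r$ region split again at $r=1$.

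For $r\le r_0$ I use the factor $\delta r^a$ coming from the linear bound on $|e^{i\theta}-1|$. When $r\le1$ I bound $(1+r^2)^{s/2}\ge1$, so the quotient is at most $\delta r^a\le\delta$, and since $s/a\le1$ and $\delta<1$ one has $\delta\le\delta^{s/a}$. When $1<r\le r_0$ I bound $(1+r^2)^{s/2}\ge r^s$, so the quotient is at most $\delta r^{a-s}$; because $a\ge s$ this is nondecreasing in $r$ and hence maximized at $r=r_0$, where it equals $\delta\,\delta^{-(a-s)/a}=\delta^{s/a}$. For $r>r_0$ I instead use the trivial bound $|e^{i\theta}-1|\le2$ together with $(1+r^2)^{s/2}\ge r^s>r_0^{\,s}=\delta^{-s/a}$, which yields a quotient bounded by $2\delta^{s/a}$. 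Collecting the three regions gives $|m_a(\xi)|\le C\delta^{s/a}$ with $C$ independent of $\delta$.

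The computation is elementary; the only real point is to choose the threshold $r_0=\delta^{-1/a}$ correctly and to observe that the exponent balance in the range $1<r\le r_0$ is sharp---the bound $\delta r^{a-s}$ attains exactly $\delta^{s/a}$ at the endpoint, which is what forces the hypothesis $s\le a$ (so that $r^{a-s}$ is nondecreasing) together with $s/a\le1$ (used in the region $r\le1$). No oscillation or stationary-phase input is needed, since we only exploit the crude size of $e^{i\delta r^a}-1$.
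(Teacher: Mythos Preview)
Your proof is correct and follows essentially the same approach as the paper: the same three-region split at $|\xi|=1$ and $|\xi|=\delta^{-1/a}$, the same use of $|e^{i\theta}-1|\le\min\{|\theta|,2\}$, and the same endpoint computation $\delta\cdot\delta^{-(a-s)/a}=\delta^{s/a}$ in the middle range. Your explicit remarks on why the hypotheses $s\le a$ and $s/a\le1$ enter are a nice addition but do not change the argument.
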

\begin{proof}
The proof can be divided into three situations.
\begin{enumerate}
  \item[\rm(i)] if $0\leq|\xi|\leq1$, we have
  $$|m_a(\xi)|=\left|\frac{e^{i\delta|\xi|^a}-1}{(1+|\xi|^2)^{\frac{s}{2}}}\right|\leq\delta|\xi|^a\leq\delta\leq\delta^{\frac{s}{a}}.$$
  \item[\rm(ii)] if $1<|\xi|<\delta^{-\frac{1}{a}}$, we obtain
  $$|m_a(\xi)|\leq\frac{\delta|\xi|^a}{|\xi|^s}=\delta|\xi|^{a-s}\leq\delta\delta^{-\frac{a-s}{a} }=\delta^{\frac{s}{a}}.$$
  \item[\rm(iii)] if $|\xi|\geq\delta^{-\frac{1}{a}}$, we get
  $$|m_a(\xi)|\lesssim\frac{1}{|\xi|^s}\leq\frac{1}{\delta^{-\frac{s}{a}}}=\delta^{\frac{s}{a}}.$$
\end{enumerate}
Altogether we have shown that $\|m_a\|_{L^\infty(\mathbb{R}^n)}\lesssim\delta^{\frac{s}{a}}$, which completes the proof of Lemma \ref{lemma 2.2}.
\end{proof}

\begin{proof}[Proof of Theorem \ref{theorem 1.2}]
Similarly to the proof of Theorem \ref{theorem 1.1}, we have that
$$h_{k,a}(x)=(2\pi)^{-n}\int_{\mathbb{R}^n} e^{ix\cdot\xi}m_a(\xi)\widehat{\varphi_1}(\xi)\,\textrm{d}\xi.$$
From Lemma \ref{lemma 2.2} we obtain $\|m_a\|_{L^\infty(\mathbb{R}^n)}\lesssim {t_k}^{\frac{s}{a}}$. We may conclude that
$$\|h_{k,a}\|_{L^2(\mathbb{R}^n)}^2\lesssim {t_k}^{\frac{2s}{a}}\|f\|_{H^s(\mathbb{R}^n)}^2.$$
This, combined with the fact that $\sum_{k=1}^{\infty}{t_k}^{\frac{2s}{a}}<\infty$, we assert that $\sum_{k=1}^\infty\int_{\mathbb{R}^n}|h_{k,a}|^2\,\textrm{d}x<\infty$. Applying Fubini's theorem again, we have $\int_{\mathbb{R}^n}\sum_{k=1}^\infty|h_{k,a}|^2\,\textrm{d}x<\infty$, which further implies that $\lim_{k\rightarrow\infty}h_{k,a}(x)=0$, a.e. $x\in\mathbb {R}^n$. Therefore,
$$\lim_{k\rightarrow\infty}(2\pi)^{-n}S_{t_k,a}f(x)=f(x)$$
for almost everywhere $x\in\mathbb{R}^n$. This completes the proof of Theorem \ref{theorem 1.2}.
\end{proof}

\begin{proof}[Proof of Theorem \ref{theorem 1.3}]
Similarly to the proof of Theorem \ref{theorem 1.1} and Theorem \ref{theorem 1.2}, it suffices to show that $\|m_{a,\mu}\|_{L^\infty(\mathbb{R}^n)}\lesssim\delta^{1+\frac{s-1}{a}}$ when $\beta>1$ and  $\|m_{a,\mu}\|_{L^\infty(\mathbb{R}^n)}\lesssim\delta^{\beta+\frac{s-1}{a}}$ when $\beta\leq1$
for $m_{a,\mu}(\xi):=\frac{e^{i(\delta^\beta\mu\cdot\xi +\delta|\xi|^a)}-1}{(1+|\xi|^2)^{\frac{s}{2}}}$.

In fact,
when $\beta>1$,
\begin{enumerate}
  \item[\rm(i)] if $0\leq|\xi|\leq1$, we have
  $$|m_{a,\mu}(\xi)|=\left|\frac{e^{i(\delta^\beta\mu\cdot\xi +\delta|\xi|^a)}-1}{(1+|\xi|^2)^{\frac{s}{2}}}\right|\leq|\delta^\beta\mu\cdot\xi+\delta|\xi|^a|\leq\delta(|\xi|+|\xi|^a)\leq2\delta|\xi|^a\leq2\delta\leq2\delta^{1+\frac{s-1}{a}}.$$
  \item[\rm(ii)] if $1<|\xi|<\delta^{-\frac{1}{a}}$, we obtain
  $$|m_{a,\mu}(\xi)|\leq\frac{|\delta^\beta\mu\cdot\xi+\delta|\xi|^a|}{|\xi|^s}\leq\frac{\delta(|\xi|+|\xi|^a)}{|\xi|^s}
  \leq\frac{2\delta|\xi|}{|\xi|^s}=2\delta|\xi|^{1-s}
  \leq2\delta\delta^{-\frac{1-s}{a}}=2\delta^{1+\frac{s-1}{a}}.$$
  \item[\rm(iii)] if $|\xi|\geq\delta^{-\frac{1}{a}}$, we get
  $$|m_{a,\mu}(\xi)|\lesssim\frac{1}{|\xi|^s}\leq\frac{1}{\delta^{-\frac{s}{a}}}\leq\delta^{1+\frac{s-1}{a}}.$$
\end{enumerate}
All of these imply that $\|m_{a,\mu}\|_{L^\infty(\mathbb{R}^n)}\lesssim\delta^{1+\frac{s-1}{a}}$ when $\beta>1$.

Next we consider the case of $\beta\leq1$,
\begin{enumerate}
  \item[\rm(i)] if $0\leq|\xi|\leq1$, we have
  $$|m_{a,\mu}(\xi)|=\left|\frac{e^{i(\delta^\beta\mu\cdot\xi +\delta|\xi|^a)}-1}{(1+|\xi|^2)^{\frac{s}{2}}}\right|\leq|\delta^\beta\mu\cdot\xi+\delta|\xi|^a|\leq\delta^\beta(|\xi|+|\xi|^a)\leq2\delta^\beta|\xi|^a\leq2\delta^\beta\leq2\delta^{\beta+\frac{s-1}{a}}.$$
  \item[\rm(ii)] if $1<|\xi|<\delta^{-\frac{1}{a}}$, we obtain
  $$|m_{a,\mu}(\xi)|\leq\frac{|\delta^\beta\mu\cdot\xi+\delta|\xi|^a|}{|\xi|^s}\leq\frac{\delta^\beta(|\xi|+|\xi|^a)}{|\xi|^s}
  \leq\frac{2\delta^\beta|\xi|}{|\xi|^s}=2\delta^\beta|\xi|^{1-s}
  \leq2\delta^\beta\delta^{-\frac{1-s}{a}}=2\delta^{\beta+\frac{s-1}{a}}.$$
  \item[\rm(iii)] if $|\xi|\geq\delta^{-\frac{1}{a}}$, we get
  $$|m_{a,\mu}(\xi)|\lesssim\frac{1}{|\xi|^s}\leq\frac{1}{\delta^{-\frac{s}{a}}}\leq\delta^{\beta+\frac{s-1}{a}}.$$
\end{enumerate}

This completes the proof of Theorem \ref{theorem 1.3}.
\end{proof}

\begin{proof}[Proof of Theorem \ref{theorem 1.4}]
Similarly to the proof of Theorem \ref{theorem 1.3}, it is enough to show that $\|m_{a,\mu}\|_{L^\infty(\mathbb{R}^n)}\lesssim\delta^{\frac{s}{a}}$ when $\beta>1$
and $\|m_{a,\mu}\|_{L^\infty(\mathbb{R}^n)}\lesssim\delta^{\beta-1+\frac{s}{a}}$ when $\beta\leq1$.

When $\beta>1$,
\begin{enumerate}
  \item[\rm(i)] if $0\leq|\xi|\leq1$, we have
  $$|m_{a,\mu}(\xi)|=\left|\frac{e^{i(\delta^\beta\mu\cdot\xi +\delta|\xi|^a)}-1}{(1+|\xi|^2)^{\frac{s}{2}}}\right|\leq|\delta^\beta\mu\cdot\xi+\delta|\xi|^a|\leq\delta(|\xi|+|\xi|^a)
  \leq2\delta|\xi|\leq2\delta\leq2\delta^{\frac{s}{a}}.$$
  \item[\rm(ii)] if $1<|\xi|<\delta^{-\frac{1}{a}}$, we obtain
  $$|m_{a,\mu}(\xi)|\leq\frac{|\delta^\beta\mu\cdot\xi+\delta|\xi|^a|}{|\xi|^s}\leq\frac{\delta(|\xi|+|\xi|^a)}{|\xi|^s}
  \leq\frac{2\delta|\xi|^a}{|\xi|^s}=2\delta|\xi|^{a-s}
  \leq2\delta\delta^{-\frac{a-s}{a}}=2\delta^{\frac{s}{a}}.$$
  \item[\rm(iii)] if $|\xi|\geq\delta^{-\frac{1}{a}}$, we get
  $$|m_{a,\mu}(\xi)|\lesssim\frac{1}{|\xi|^s}\leq\frac{1}{\delta^{-\frac{s}{a}}}=\delta^{\frac{s}{a}}.$$
\end{enumerate}
When $\beta\leq1$,
\begin{enumerate}
  \item[\rm(i)] if $0\leq|\xi|\leq1$, we have
  $$|m_{a,\mu}(\xi)|=\left|\frac{e^{i(\delta^\beta\mu\cdot\xi +\delta|\xi|^a)}-1}{(1+|\xi|^2)^{\frac{s}{2}}}\right|\leq|\delta^\beta\mu\cdot\xi+\delta|\xi|^a|\leq\delta^\beta(|\xi|+|\xi|^a)
  \leq2\delta^\beta|\xi|\leq2\delta^\beta\leq2\delta^{\beta-1+\frac{s}{a}}.$$
  \item[\rm(ii)] if $1<|\xi|<\delta^{-\frac{1}{a}}$, we obtain
  $$|m_{a,\mu}(\xi)|\leq\frac{|\delta^\beta\mu\cdot\xi+\delta|\xi|^a|}{|\xi|^s}\leq\frac{\delta^\beta(|\xi|+|\xi|^a)}{|\xi|^s}
  \leq\frac{2\delta^\beta|\xi|^a}{|\xi|^s}=2\delta^\beta|\xi|^{a-s}
  \leq2\delta^\beta\delta^{-\frac{a-s}{a}}=2\delta^{\beta-1+\frac{s}{a}}.$$
  \item[\rm(iii)] if $|\xi|\geq\delta^{-\frac{1}{a}}$, we get
  $$|m_{a,\mu}(\xi)|\lesssim\frac{1}{|\xi|^s}\leq\frac{1}{\delta^{-\frac{s}{a}}}=\delta^{\frac{s}{a}}\leq\delta^{\beta-1+\frac{s}{a}}.$$
\end{enumerate}
This completes the proof of Theorem \ref{theorem 1.4}.
\end{proof}

In the proof of Theorem \ref{theorem 1.5}, it suffices to proof the Lemma \ref{lemma 3.3}.

\begin{lemma}\label{lemma 3.3}
Let $n\geq1$, $0<s\leq1$, $0<\delta<1$, $m_\gamma(\xi):=\frac{e^{i\delta\gamma(|\xi|)}-1}{(1+|\xi|^2)^{\frac{s}{2}}}$, $\gamma(t)$ be the same as in Theorem \ref{theorem 1.5}.
We have $\|m_\gamma\|_{L^\infty(\mathbb{R}^n)}\leq C\frac{1}{[\gamma^{-1}(\frac{\gamma(1)}{\delta})]^s}$, where the constant C does not depend on $\delta$.
\end{lemma}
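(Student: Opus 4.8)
The plan is to mimic the proof of Lemma \ref{lemma 2.2} exactly, replacing the special exponent $\delta^{-1/a}$ by the right threshold adapted to $\gamma$. Write $R_\delta:=\gamma^{-1}(\frac{\gamma(1)}{\delta})$, so that by definition $\gamma(R_\delta)=\frac{\gamma(1)}{\delta}$, equivalently $\delta=\frac{\gamma(1)}{\gamma(R_\delta)}$; since $0<\delta<1$ forces $\gamma(R_\delta)>\gamma(1)$, and $\gamma$ is increasing, we have $R_\delta>1$. Before splitting I would record two monotonicity facts that come for free from the hypothesis that $\frac{\gamma(t)}{t}$ is nonnegative and increasing. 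First, $\gamma(t)=t\cdot\frac{\gamma(t)}{t}$ is a product of positive increasing functions, hence $\gamma$ itself is increasing and $\gamma^{-1}$ is well defined. Second, and this is the key point, $\frac{\gamma(t)}{t^s}=\frac{\gamma(t)}{t}\cdot t^{1-s}$ is again a product of positive increasing functions because $s\leq1$ makes $t^{1-s}$ increasing; therefore $\frac{\gamma(t)}{t^s}$ is increasing on $(0,\infty)$. A consequence I will use repeatedly is $\gamma(R_\delta)\geq \gamma(1)R_\delta\geq \gamma(1)R_\delta^{\,s}$ (from $\frac{\gamma(R_\delta)}{R_\delta}\geq\frac{\gamma(1)}{1}$ and $R_\delta>1$).

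Next I would split $\mathbb{R}^n$ into the three regions $0\leq|\xi|\leq1$, $1<|\xi|<R_\delta$, and $|\xi|\geq R_\delta$, and in each estimate $|m_\gamma(\xi)|$ by $CR_\delta^{-s}$. In the first region I bound the numerator by $|e^{i\delta\gamma(|\xi|)}-1|\leq \delta\gamma(|\xi|)\leq\delta\gamma(1)$ (using $\gamma$ increasing), drop the denominator, and then compute $\delta\gamma(1)=\frac{\gamma(1)^2}{\gamma(R_\delta)}\leq\frac{\gamma(1)^2}{\gamma(1)R_\delta^{\,s}}=\gamma(1)R_\delta^{-s}$. In the second region I use $(1+|\xi|^2)^{s/2}\geq|\xi|^s$ together with $|e^{i\delta\gamma(|\xi|)}-1|\leq\delta\gamma(|\xi|)$ to get $|m_\gamma(\xi)|\leq\frac{\delta\gamma(|\xi|)}{|\xi|^s}$; the monotonicity of $\frac{\gamma(t)}{t^s}$ gives $\frac{\gamma(|\xi|)}{|\xi|^s}\leq\frac{\gamma(R_\delta)}{R_\delta^{\,s}}$ for $|\xi|<R_\delta$, so $|m_\gamma(\xi)|\leq\frac{\delta\gamma(R_\delta)}{R_\delta^{\,s}}=\frac{\gamma(1)}{R_\delta^{\,s}}$ since $\delta\gamma(R_\delta)=\gamma(1)$. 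In the third region I simply use the trivial bound $|e^{i\delta\gamma(|\xi|)}-1|\leq2$ and $(1+|\xi|^2)^{s/2}\geq|\xi|^s\geq R_\delta^{\,s}$, giving $|m_\gamma(\xi)|\leq 2R_\delta^{-s}$.

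Combining the three regions yields $\|m_\gamma\|_{L^\infty(\mathbb{R}^n)}\leq C R_\delta^{-s}=C\,[\gamma^{-1}(\frac{\gamma(1)}{\delta})]^{-s}$ with $C$ depending only on $\gamma(1)$ and not on $\delta$, which is the claim. As a sanity check, taking $\gamma(t)=t^a$ with $a\geq1$ gives $R_\delta=\delta^{-1/a}$ and $R_\delta^{-s}=\delta^{s/a}$, matching the exponent in Lemma \ref{lemma 2.2} and in Sj\"olin's result. The only genuinely nonroutine step is recognizing the correct threshold $R_\delta=\gamma^{-1}(\frac{\gamma(1)}{\delta})$ and proving that $\frac{\gamma(t)}{t^s}$ is increasing; the latter is exactly where the structural hypothesis on $\frac{\gamma(t)}{t}$ (rather than on $\gamma$ alone) and the restriction $s\leq1$ are both used, so I expect that monotonicity lemma to be the heart of the argument, with the three case estimates then being straightforward.
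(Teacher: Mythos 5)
Your proposal is correct and follows essentially the same route as the paper: the same three-region split at the threshold $\gamma^{-1}(\gamma(1)/\delta)$, with the monotonicity of $\gamma(t)/t^s=\frac{\gamma(t)}{t}\,t^{1-s}$ as the key ingredient in the middle region. The only cosmetic difference is in the region $|\xi|\leq1$, where you bound $\delta\gamma(1)$ directly via $\gamma(R_\delta)\geq\gamma(1)R_\delta^{\,s}$ while the paper deduces $\delta\leq R_\delta^{-s}$ from $R_\delta\leq1/\delta$; both are immediate consequences of the same hypothesis.
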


\begin{proof}
The proof can also be divided into three situations.
\begin{enumerate}
  \item[\rm(i)] if $0\leq|\xi|\leq1$, from the fact that $0<\delta<1$ and $\frac{\gamma(t)}{t}:\ \mathbb{R}^+\rightarrow \mathbb{R}^+$ is smooth and increasing on $(0,\infty)$, we have that $\frac{\gamma(1)}{\delta}\leq \gamma(\frac{1}{\delta})$. Since $\gamma(t):\ \mathbb{R}^+\rightarrow \mathbb{R}^+$ is smooth and increasing on $(0,\infty)$, we further obtain that $\gamma^{-1}(\frac{\gamma(1)}{\delta})\leq \frac{1}{\delta}$. By $0<s\leq1$ and $0<\delta<1$, we assert that $[\gamma^{-1}(\frac{\gamma(1)}{\delta})]^s\leq [\frac{1}{\delta}]^s\leq\frac{1}{\delta}$. Consequently, we may conclude that $\delta\leq\frac{1}{[\gamma^{-1}(\frac{\gamma(1)}{\delta})]^s}$. The above implies
  $$|m_\gamma(\xi)|=\left|\frac{e^{i\delta\gamma(|\xi|)}-1}{(1+|\xi|^2)^{\frac{s}{2}}}\right|\leq\delta\gamma(|\xi|)\leq\delta\gamma(1)\lesssim\delta\leq\frac{1}{[\gamma^{-1}(\frac{\gamma(1)}{\delta})]^s}.$$
  \item[\rm(ii)] if $1<|\xi|<\gamma^{-1}(\frac{\gamma(1)}{\delta})$, from the fact that $0<s\leq1$ and $\frac{\gamma(t)}{t}:\ \mathbb{R}^+\rightarrow \mathbb{R}^+$ is smooth and increasing on $(0,\infty)$, which leads to $\frac{\gamma(t)}{t^s}=\frac{\gamma(t)}{t}t^{1-s}$ is also increasing on $(0,\infty)$ and further implies that
  $$|m_\gamma(\xi)|\leq\frac{\delta\gamma(|\xi|)}{|\xi|^s}\leq\delta\frac{\gamma[\gamma^{-1}(\frac{\gamma(1)}{\delta})]}{[\gamma^{-1}(\frac{\gamma(1)}{\delta})]^s}
  =\frac{\gamma(1)}{[\gamma^{-1}(\frac{\gamma(1)}{\delta})]^s}\lesssim \frac{1}{[\gamma^{-1}(\frac{\gamma(1)}{\delta})]^s}.$$
  \item[\rm(iii)] if $|\xi|\geq\gamma^{-1}(\frac{\gamma(1)}{\delta})$, from $t^s:\ \mathbb{R}^+\rightarrow \mathbb{R}^+$ is smooth and increasing on $(0,\infty)$, we have that
  $$|m_\gamma(\xi)|\lesssim\frac{1}{|\xi|^s}\leq\frac{1}{[\gamma^{-1}(\frac{\gamma(1)}{\delta})]^s}.$$
\end{enumerate}
This completes the proof of Lemma \ref{lemma 3.3}.
\end{proof}

In the proof of Theorem \ref{theorem 1.55}, it suffices to proof the Lemma \ref{lemma 3.33}.

\begin{lemma}\label{lemma 3.33}
Let $n\geq1$, $0<s\leq1$, $0<\delta<1$, $m_{\gamma,\mu}(\xi):=\frac{e^{i(\delta^\beta\mu\cdot\xi+\delta\gamma(|\xi|))}-1}{(1+|\xi|^2)^{\frac{s}{2}}}$, $\gamma(t)$ be the same as in Theorem \ref{theorem 1.5}.
Then $\|m_{\gamma,\mu}\|_{L^\infty(\mathbb{R}^n)}\leq C\frac{1}{[\gamma^{-1}(\frac{\gamma(1)}{\delta})]^s}$ when $\beta>1$,
$\|m_{\gamma,\mu}\|_{L^\infty(\mathbb{R}^n)}\leq C\frac{\delta^{\beta-1}}{[\gamma^{-1}(\frac{\gamma(1)}{\delta})]^s}$ when $\beta\leq1$,
where the constant C does not depend on $\delta$.
\end{lemma}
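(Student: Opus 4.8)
The plan is to follow the three-region decomposition already used in Lemma \ref{lemma 2.2} and Lemma \ref{lemma 3.3}, the only new feature being the extra phase contribution $\delta^\beta\mu\cdot\xi$ coming from the translation $x+t^\beta\mu$. Since $\mu$ is a unit vector I will always bound $|\mu\cdot\xi|\leq|\xi|$, and I will use the elementary estimates $|e^{i\theta}-1|\leq|\theta|$ on the low- and intermediate-frequency regions and $|e^{i\theta}-1|\leq2$ on the high-frequency region. The frequency ranges are exactly as before: $0\leq|\xi|\leq1$, then $1<|\xi|<\gamma^{-1}(\frac{\gamma(1)}{\delta})$, then $|\xi|\geq\gamma^{-1}(\frac{\gamma(1)}{\delta})$. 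In every region I split into the cases $\beta>1$ and $\beta\leq1$; the distinction is simply that $\delta^\beta\leq\delta$ when $\beta>1$ while $\delta^\beta\geq\delta$ when $\beta\leq1$, because $0<\delta<1$.

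On the low-frequency region the numerator is controlled by $|\delta^\beta\mu\cdot\xi+\delta\gamma(|\xi|)|\leq\delta^\beta|\xi|+\delta\gamma(|\xi|)\leq\delta^\beta+\delta\gamma(1)$, while the denominator is comparable to $1$. When $\beta>1$ this is $\lesssim\delta$, hence $\leq\frac{1}{[\gamma^{-1}(\gamma(1)/\delta)]^s}$ by the inequality $\delta\leq\frac{1}{[\gamma^{-1}(\gamma(1)/\delta)]^s}$ established in step (i) of Lemma \ref{lemma 3.3}; when $\beta\leq1$ it is $\lesssim\delta^\beta=\delta^{\beta-1}\delta\leq\frac{\delta^{\beta-1}}{[\gamma^{-1}(\gamma(1)/\delta)]^s}$ by the same inequality. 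On the high-frequency region I bound the numerator by $2$ and use $(1+|\xi|^2)^{s/2}\gtrsim|\xi|^s$, so $|m_{\gamma,\mu}(\xi)|\lesssim|\xi|^{-s}\leq\frac{1}{[\gamma^{-1}(\gamma(1)/\delta)]^s}$; in the case $\beta\leq1$ this is even stronger than required, since $\delta^{\beta-1}\geq1$.

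The only genuinely new work is the intermediate region, where I split the numerator and bound $|m_{\gamma,\mu}(\xi)|\leq\frac{\delta\gamma(|\xi|)}{|\xi|^s}+\frac{\delta^\beta|\xi|}{|\xi|^s}$. The first summand is handled precisely as in step (ii) of Lemma \ref{lemma 3.3}: using that $\gamma(t)/t^s=\frac{\gamma(t)}{t}t^{1-s}$ is increasing, it is dominated by $\frac{\gamma(1)}{[\gamma^{-1}(\gamma(1)/\delta)]^s}\lesssim\frac{1}{[\gamma^{-1}(\gamma(1)/\delta)]^s}$. For the second summand I write $\frac{\delta^\beta|\xi|}{|\xi|^s}=\delta^\beta|\xi|^{1-s}\leq\delta^\beta[\gamma^{-1}(\frac{\gamma(1)}{\delta})]^{1-s}=\delta^\beta\gamma^{-1}(\frac{\gamma(1)}{\delta})\cdot\frac{1}{[\gamma^{-1}(\gamma(1)/\delta)]^s}$, using $1-s\geq0$ and $|\xi|<\gamma^{-1}(\frac{\gamma(1)}{\delta})$. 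At this point the crucial estimate $\gamma^{-1}(\frac{\gamma(1)}{\delta})\leq\frac{1}{\delta}$, also from Lemma \ref{lemma 3.3}, finishes the argument: it gives $\delta^\beta\gamma^{-1}(\frac{\gamma(1)}{\delta})\leq\delta^{\beta-1}$, which is $\leq1$ when $\beta>1$ and equals the desired prefactor $\delta^{\beta-1}$ when $\beta\leq1$. Assembling the three regions then yields the two claimed bounds. I expect this control of the extra linear phase $\delta^\beta|\xi|^{1-s}$ in the intermediate region to be the only delicate step, and it is exactly where the inequality $\gamma^{-1}(\gamma(1)/\delta)\leq1/\delta$ is indispensable.
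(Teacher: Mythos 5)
Your proposal is correct and follows essentially the same route as the paper's proof: the identical three-region decomposition in $|\xi|$, the same two key facts from Lemma \ref{lemma 3.3} (namely $\gamma^{-1}(\frac{\gamma(1)}{\delta})\leq\frac{1}{\delta}$ and the monotonicity of $\frac{\gamma(t)}{t^s}$), and the same treatment of the extra linear phase term $\delta^\beta|\xi|^{1-s}$ in the intermediate region. The only difference is organizational (you order by region and then by the sign of $\beta-1$, and you carry $\delta^\beta$ slightly longer before invoking $\delta^{\beta-1}\leq1$ in the case $\beta>1$), which does not change the argument.
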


\begin{proof}
Similarly to the proof of Lemma \ref{lemma 3.3}, the proof can also be divided into three situations.
When $\beta>1$,
\begin{enumerate}
  \item[\rm(i)] if $0\leq|\xi|\leq1$, we can also get $\gamma^{-1}(\frac{\gamma(1)}{\delta})\leq \frac{1}{\delta}$ and $\delta\leq\frac{1}{[\gamma^{-1}(\frac{\gamma(1)}{\delta})]^s}$.
  $$|m_{\gamma,\mu}(\xi)|=\left|\frac{e^{i(\delta^\beta\mu\cdot\xi+\delta\gamma(|\xi|))}-1}{(1+|\xi|^2)^{\frac{s}{2}}}\right|
  \leq\delta(|\xi|+\gamma(|\xi|))\leq\delta(1+\gamma(1))\lesssim\delta\leq\frac{1}{[\gamma^{-1}(\frac{\gamma(1)}{\delta})]^s}.$$
  \item[\rm(ii)] if $1<|\xi|<\gamma^{-1}(\frac{\gamma(1)}{\delta})$, from the fact that $\frac{\gamma(t)}{t^s}=\frac{\gamma(t)}{t}t^{1-s}$ is also increasing on $(0,\infty)$ with $0<s\leq1$ and further implies that
\begin{eqnarray*}
  |m_{\gamma,\mu}(\xi)|
  &\leq&\frac{\delta^\beta|\xi|+\delta\gamma(|\xi|)}{|\xi|^s}\\
  &\leq&\frac{\delta(|\xi|+\gamma(|\xi|))}{|\xi|^s}\\
  &=&\delta\left(|\xi|^{1-s}+\frac{\gamma(|\xi|)}{|\xi|^s}\right)\\
  &\leq&\delta\left([\gamma^{-1}(\frac{\gamma(1)}{\delta})]^{1-s}+\frac{\gamma[\gamma^{-1}(\frac{\gamma(1)}{\delta})]}{[\gamma^{-1}(\frac{\gamma(1)}{\delta})]^s}\right)\\
  &=&\delta[\gamma^{-1}(\frac{\gamma(1)}{\delta})]^{1-s}+\frac{\gamma(1)}{[\gamma^{-1}(\frac{\gamma(1)}{\delta})]^s}\\
  &\lesssim& \frac{1}{[\gamma^{-1}(\frac{\gamma(1)}{\delta})]^s}.
\end{eqnarray*}
  \item[\rm(iii)] if $|\xi|\geq\gamma^{-1}(\frac{\gamma(1)}{\delta})$, from $t^s:\ \mathbb{R}^+\rightarrow \mathbb{R}^+$ is smooth and increasing on $(0,\infty)$, we have that
  $$|m_{\gamma,\mu}(\xi)|\lesssim\frac{1}{|\xi|^s}\leq\frac{1}{[\gamma^{-1}(\frac{\gamma(1)}{\delta})]^s}.$$
\end{enumerate}
Next we consider the case of $\beta\leq1$.
\begin{enumerate}
  \item[\rm(i)] if $0\leq|\xi|\leq1$, we can also get $\gamma^{-1}(\frac{\gamma(1)}{\delta})\leq \frac{1}{\delta}$ and $\delta\leq\frac{1}{[\gamma^{-1}(\frac{\gamma(1)}{\delta})]^s}$.
  $$|m_{\gamma,\mu}(\xi)|=\left|\frac{e^{i(\delta^\beta\mu\cdot\xi+\delta\gamma(|\xi|))}-1}{(1+|\xi|^2)^{\frac{s}{2}}}\right|
  \leq\delta^\beta(|\xi|+\gamma(|\xi|))\leq\delta^\beta(1+\gamma(1))\lesssim\delta^\beta
  \lesssim\frac{\delta^{\beta-1}}{[\gamma^{-1}(\frac{\gamma(1)}{\delta})]^s}.$$
  \item[\rm(ii)] if $1<|\xi|<\gamma^{-1}(\frac{\gamma(1)}{\delta})$, from the fact that $\frac{\gamma(t)}{t^s}=\frac{\gamma(t)}{t}t^{1-s}$ is also increasing on $(0,\infty)$ with $0<s\leq1$ and further implies that
 \begin{eqnarray*}
 |m_{\gamma,\mu}(\xi)|
 &\leq&\frac{\delta^\beta|\xi|+\delta\gamma(|\xi|)}{|\xi|^s}\\
  &\leq&\frac{\delta^\beta(|\xi|+\gamma(|\xi|))}{|\xi|^s}\\
  &=&\delta^\beta\left(|\xi|^{1-s}+\frac{\gamma(|\xi|)}{|\xi|^s}\right)\\
  &\leq&\delta^\beta\left([\gamma^{-1}(\frac{\gamma(1)}{\delta})]^{1-s}+\frac{\gamma[\gamma^{-1}(\frac{\gamma(1)}{\delta})]}{[\gamma^{-1}(\frac{\gamma(1)}{\delta})]^s}\right)\\
  &\lesssim&\delta^\beta[\gamma^{-1}(\frac{\gamma(1)}{\delta})]^{1-s}+\frac{\delta^{\beta-1}}{[\gamma^{-1}(\frac{\gamma(1)}{\delta})]^s}\\
  &\lesssim &\frac{\delta^{\beta-1}}{[\gamma^{-1}(\frac{\gamma(1)}{\delta})]^s}.
  \end{eqnarray*}
  \item[\rm(iii)] if $|\xi|\geq\gamma^{-1}(\frac{\gamma(1)}{\delta})$, from $t^s:\ \mathbb{R}^+\rightarrow \mathbb{R}^+$ is smooth and increasing on $(0,\infty)$, we have that
  $$|m_{\gamma,\mu}(\xi)|\lesssim\frac{1}{|\xi|^s}\leq\frac{1}{[\gamma^{-1}(\frac{\gamma(1)}{\delta})]^s}
  \lesssim\frac{\delta^{\beta-1}}{[\gamma^{-1}(\frac{\gamma(1)}{\delta})]^s}.$$
\end{enumerate}

This completes the proof of Lemma \ref{lemma 3.33}.
\end{proof}

\section*{Acknowledgements}

The authors would like to thank Prof. Junfeng Li for many valuable comments and useful discussions.

\bigskip

\noindent  Dan Li

\smallskip

\noindent  Laboratory of Mathematics and Complex Systems
(Ministry of Education of China),
School of Mathematical Sciences, Beijing Normal University,
Beijing 100875, People's Republic of China

\smallskip

\noindent {\it E-mails}: \texttt{danli@mail.bnu.edu.cn}

\bigskip

\noindent Haixia Yu (Corresponding author)

\smallskip

\noindent Department of Mathematics, Sun Yat-sen University, Guangzhou, 510275,  People's Republic of China

\smallskip

\noindent{\it E-mail}: \texttt{yuhaixia@mail.bnu.edu.cn}

\end{document}